\documentclass[preprint]{imsart}

\usepackage[english]{babel}
\usepackage[T1]{fontenc}
\usepackage[utf8]{inputenc}
\usepackage{lmodern}

\usepackage{calc}
\setlength\textwidth{140mm}
\setlength\oddsidemargin{(\paperwidth-\textwidth)/2 - 1in}
\setlength\evensidemargin{(\paperwidth-\textwidth)/2 - 1in}

\usepackage{amsmath}
\usepackage{amsfonts}
\usepackage{amssymb}
\usepackage{amsthm}
\usepackage{euscript}
\usepackage[numbers]{natbib}
\usepackage{xspace}
\usepackage{bm}
\usepackage{enumerate}
\usepackage{hyperref}
\usepackage{graphicx}
\usepackage{subfig}
\usepackage{mathabx}

\hyphenpenalty=5000

\startlocaldefs


\newtheorem{lemma}{Lemma}[section]
\newtheorem{theorem}{Theorem}
\newtheorem{proposition}[theorem]{Proposition}

\theoremstyle{definition}

\newtheorem{remark}{Remark}

\numberwithin{equation}{section}


\DeclareMathOperator{\diam}{diam}



\newcommand{\R}{\ensuremath{\mathbb{R}}\xspace}

\newcommand{\N}{\ensuremath{\mathbb{N}}\xspace}


\newcommand{\Pii}{\ensuremath{\mathcal P}\xspace}

\newcommand{\Si}{\ensuremath{\mathcal S}\xspace}


\newcommand{\dimH}{\ensuremath{{\dim}_{\text{\normalfont\tiny H}}}\xspace}






\newcommand{\pth}[1]{(#1)}
\newcommand{\pthb}[1]{\bigl(#1\bigr)}
\newcommand{\pthB}[1]{\Bigl(#1\Bigr)}
\newcommand{\pthbb}[1]{\biggl(#1\biggr)}

\newcommand{\bkt}[1]{[#1]}
\newcommand{\bktb}[1]{\bigl[#1\bigr]}
\newcommand{\bktB}[1]{\Bigl[#1\Bigr]}
\newcommand{\bktbb}[1]{\biggl[#1\biggr]}

\newcommand{\brc}[1]{\{#1\}}
\newcommand{\brcb}[1]{\bigl\{#1\bigr\}}
\newcommand{\brcB}[1]{\Bigl\{#1\Bigr\}}
\newcommand{\brcbb}[1]{\biggl\{#1\biggr\}}

\newcommand{\scpr}[2]{\langle #1,#2 \rangle}

\newcommand{\scprbb}[2]{\biggl\langle #1,#2 \biggr\rangle}

\newcommand{\dt}{\ensuremath{\mathrm d}\xspace} 
\newcommand{\eqdef}{:=}

\newcommand{\ivoo}[1]{\ensuremath{(#1)}}

\newcommand{\ivfo}[1]{\ensuremath{[#1)}}

\newcommand{\ivfobb}[1]{\ensuremath{\biggl[#1\biggr)}}

\newcommand{\ivff}[1]{\ensuremath{[#1]}}

\newcommand{\abs}[1]{\lvert#1\rvert}
\newcommand{\absb}[1]{\bigl\lvert#1\bigr\rvert}
\newcommand{\absB}[1]{\Bigl\lvert#1\Bigr\rvert}

\newcommand{\norm}[1]{\lVert#1\rVert}
\newcommand{\normb}[1]{\bigl\lVert#1\bigr\rVert}


\newcommand{\Esp}{\ensuremath{\mathbb E}\xspace}

\newcommand{\prb}[2][]{\mathbb{P}#1\pthb{\hspace{1pt}#2\hspace{1pt}}}
\newcommand{\prB}[2][]{\mathbb{P}#1\pthB{#2}}

\newcommand{\prcb}[3][]{\mathbb{P}#1\pthb{\hspace{1pt}#2\bigm|#3\hspace{1pt}}}

\newcommand{\esp}[2][]{\mathbb{E}#1\bkt{#2}}
\newcommand{\espb}[2][]{\mathbb{E}#1\bktb{\hspace{1pt}#2\hspace{1pt}}}

\newcommand{\espbb}[2][]{\mathbb{E}#1\bktbb{#2}}

\newcommand{\var}[2][]{\mathrm{Var}#1\pth{#2}}

\newcommand{\varbb}[2][]{\mathrm{Var}#1\pthbb{#2}}

\newcommand{\varcb}[3][]{\mathrm{Var}#1\pthb{\hspace{1pt}#2\bigm|#3\hspace{1pt}}}




\newcommand{\indi}{\ensuremath{\mathbf{1}}\xspace}
\newcommand{\eps}{\varepsilon}

\newcommand{\vsp}{\vspace{.15cm}}

\endlocaldefs

\begin{document}

\begin{frontmatter}

\title{Image sets of fractional Brownian sheets}
\runtitle{Image sets of fractional Brownian sheets}

\author{\fnms{Paul} \snm{Balan\c{c}a}%
\thanksref{t2}%
\ead[label=e1]{paul.balanca@gmail.com}%
\ead[label=u1,url]{balancap.github.io}%
}%
\thankstext{t2}{Research partially supported by the French Embassy in Israel.}%
\address{
Faculty of Industrial Engineering and Management\\%
Technion Israel Institute of Technology\\%
Haifa 32000, Israël\\[1ex]%
\printead{e1}\\\printead{u1}%
}

\affiliation{Technion}
\runauthor{Paul Balan\c{c}a}

\begin{abstract}
  Let \(B^H = \brc{ B^H(t), t\in\R^N }\) be an $(N,d)$-fractional Brownian sheet with Hurst index \(H=(H_1,\dotsc,H_N)\in (0,1)^N\). The main objective of the present paper is to study the Hausdorff dimension of the image sets \(B^H(F+t)\), $F\subset\R^N$ and $t\in\R^N$, in the dimension case \(d<\tfrac{1}{H_1}+\cdots+\tfrac{1}{H_N}\). Following the seminal work of \citet{Kaufman-1989}, we establish uniform dimensional properties on \(B^H\), answering questions raised by \citet{Khoshnevisan.Wu.ea-2006} and \citet{Wu.Xiao-2009}.

  For the purpose of this work, we introduce a refinement of the \emph{sectorial local-nondeterminism} property which can be of independent interest to the study of other fine properties of fractional Brownian sheets.
\end{abstract}

\begin{keyword}[class=AMS]
  \kwd{60G07}
  \kwd{60G17}
  \kwd{60G22}
  \kwd{60G44}
\end{keyword}

\begin{keyword}
  \kwd{fractional Brownian sheet}
  \kwd{Hausdorff dimension}
  \kwd{local non-determinism}
\end{keyword}

\end{frontmatter}


\section{Introduction}

In the last thirty years, several extensions of the well-known fractional Brownian motion (fBm) introduced by \citet{Mandelbrot.VanNess-1968} have emerged in the Gaussian random fields literature. Two major classes of multiparameter processes have been defined: Lévy's $N$-parameter (fractional) Brownian motion and fractional Brownian sheets. The first one is an isotropic process known to be be locally non-deterministic (LND, see \citet{Pitt-1978} for a more complete reference), self-similar and with stationary increments. As a consequence, the geometry and fine properties of the $N$-parameter fractional Brownian motion have been extensively documented by extending the classic techniques developed in the literature related to the fractal geometry of the one-dimensional fractional Brownian motion.\vsp

On the another hand, the understanding of fractional Brownian sheets (fBs) introduced by \citet{Kamont-1996} has been proved to be more challenging and technical as the former does not satisfy the classic LND property. Recall that an $(N,d)$-fractional Brownian sheet $B^H = \brc{B^H(t), t\in\R^N}$ with Hurst index $H=(H_1,\dotsc,H_N)\in\ivoo{0,1}^N$ is defined as a centered Gaussian process with independent and identically distributed components whose covariance is given by
\begin{equation}  \label{eq:fbs_cov}
  \espb{ B_0^H(s) B_0^H(t) } = \prod_{\ell=1}^N \bktB{ \abs{s_\ell}^{2H_\ell} + \abs{t_\ell}^{2H_\ell} -\abs{s_\ell-t_\ell}^{2H_\ell} }\quad s,t\in\R^N.
\end{equation}
Note that similarly to fBm, it also admits an integral representation with respect to the Brownian sheet $W$:
\begin{equation}  \label{eq:fbs_int_rep}
  B_0^H(t) = \int_{R^N} \prod_{\ell=1}^N \brcB{ (t_\ell-u_\ell)_+^{H_\ell-1/2} - (-u_\ell)_+^{H_\ell-1/2} } \,\dt W_u.
\end{equation}
In the case $H_1=\cdots=H_N=\tfrac{1}{2}$, we obtain the well-known Brownian sheet.

Anisotropic Gaussian random fields such as fBs have raised an increasing interest in recent years as they appear naturally in the study of stochastic partial differential equations (SPDEs) and Markov processes \cite{Mueller.Tribe-2002}. In a more applied perspective, several phenomena in image processing, hydrology and spatial statistics \cite{Davies.Hall-1999,Benson.Meerschaert.ea-2006} are intrinsically anisotropic, and thus, required the introduction of such theoretical models.

The study of distributional properties of fractional Brownian sheets have been considerably eased with the introduction by \citet{Khoshnevisan.Xiao-2007} of the so-called \emph{sectorial local-nondeterminism} property. Namely, the latter states that for any $u,v,t^1,\dotsc,t^n\in\ivfo{\eps,+\infty}^N$,
\begin{equation}   \label{eq:sec_local_nondet1}
  \varcb{ B^H(u) }{ B^H(t^1),\dotsc,B^H(t^n) } \geq c_0 \sum_{\ell=1}^N \min_{1\leq j\leq n} \absb{ u_\ell - t^j_\ell }^{2 H_\ell}
\end{equation}
where the constant $c_0>0$ only depends on $\eps$. Firstly introduced on the Brownian sheet, it has been then extended to general fractional Brownian sheets by \citet{Wu.Xiao-2007}.

This sectoral LND has been the cornerstone to the study of multiple geometrical properties of (fractional) Brownian sheets, allowing to adapt classic techniques used on multiparameter fractional Brownian motion to this class of processes. More precisely, the distributional properties of the local time and level sets have been investigated by \citet{Khoshnevisan.Xiao-2007,Ayache.Wu.ea-2008}, extending earlier works by \citet{Dalang.Walsh-1993,Xiao.Zhang-2002}. The fractal geometry of image sets $B^H(F)$ has also been extensively studied in \cite{Khoshnevisan.Wu.ea-2006,Khoshnevisan.Xiao-2007,Wu.Xiao-2007a,Wu.Xiao-2007}. Note that as pointed out by \citet{Xiao-2009a}, due to the anisotropic nature of fractional Brownian sheets, it is usually convenient to study geometrical properties using the following anisotropic metric $\rho$:
\begin{equation}  \label{eq:anisotropic_metric}
  \forall s,t\in\R^N;\quad \rho(s,t) = \sum_{\ell=1}^N \abs{s_\ell - t_\ell}^{H_\ell}.
\end{equation}

As previously outlined, we aim in this work to investigate uniform dimensional properties of image sets $B^H(F)$ of fractional Brownian sheets. The high dimension case $d\geq\tfrac{1}{H_1}+\cdots+\tfrac{1}{H_N}$ has been thoroughly discussed by \citet{Khoshnevisan.Wu.ea-2006,Wu.Xiao-2007a} who obtained the following result: with probability one,
\begin{equation}  \label{eq:unif_fbs}
  \text{for every Borel set } F\subset\R^N;\quad \dimH B^H(F) = \dimH^\rho F,
\end{equation}
where $\dimH^\rho$ designates the Hausdorff dimension with respect to the anisotropic metric $\rho$. Note that a similar result exists on the multiparameter fractional Brownian motion when $d\geq \tfrac{N}{\alpha}$ (see the work of \citet{Monrad.Pitt-1987}).\vsp

When $d<\tfrac{1}{H_1}+\cdots+\tfrac{1}{H_N}$, the previous uniform result does not hold any more. For instance, it is obviously false if one considers the level set $F=W^{-1}(0)$, where $W$ is a Brownian sheet: \citet{Khoshnevisan.Xiao-2007} have proved that $F$ has positive Hausdorff dimension when $d < 2N$, whereas we clearly have $\dimH W(F) = 0$. Nevertheless, following the ideas developed by \citet{Kaufman-1989}, one may hope to establish a weaker uniform Hausdorff dimension result. Indeed, the former has proved that a one-dimensional Brownian motion satisfies a slightly weaker property: with probability one, for every Borel set $F\subset\R$,
\begin{equation}  \label{eq:unif_fbs_aa}
  \dimH B(F+t) = 2\dimH F\quad\text{for almost all }t\in\R.
\end{equation}

\citet{Khoshnevisan.Wu.ea-2006,Wu.Xiao-2007a} have investigated the extension of this property to the $(N,1)$ Brownian sheet and fractional Brownian sheets satisfying $H_N d \leq 1$ (assuming that $H_1\leq\cdots\leq H_N$). Even though it may seem to natural that the former result would hold for any fBs such that $d<\tfrac{1}{H_1}+\cdots+\tfrac{1}{H_N}$, this question was left opened in the previous works as the authors observed that techniques based on sectorial LND do not seem to scale well the general case (on contrary to the $(N,d)$-fractional Brownian motion considered by \citet{Wu.Xiao-2007}).\vsp

Consequently, the main purpose of this work is to close the gap between the statement of \citet{Wu.Xiao-2007a} and the uniform case presented in Equation~\eqref{eq:unif_fbs}. More precisely, we prove in Section~\ref{sec:results} the following two uniform results on the geometry of fractional Brownian sheets.
\begin{theorem}  \label{th:weak_unif_dim_fbs}
  Let $B^H$ be a fractional Brownian sheet and suppose $d<\sum_{\ell=1}^N \frac{1}{H_\ell}$. Then, with probability $1$, for every Borel set $F\subseteq\ivoo{0,\infty}^N$,
  \begin{equation}
    \dimH B^H(F+t) = \min\brcb{ d, \dimH^\rho F }  \quad \text{for almost all }t\in\R_+^N.
  \end{equation}
\end{theorem}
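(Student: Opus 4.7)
The plan is to establish separately the upper and lower bounds on $\dimH B^H(F+t)$.

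\textbf{Upper bound.} With probability one, $B^H$ admits a uniform Hölder modulus of every exponent $<1$ with respect to the anisotropic metric $\rho$ of~\eqref{eq:anisotropic_metric}: on every compact $K\subset\R^N$ and every $\eta>0$, there is a random $C_{K,\eta}$ with $|B^H(s)-B^H(u)|\leq C_{K,\eta}\,\rho(s,u)^{1-\eta}$ for all $s,u\in K$. This transfers any $\rho$-cover of $F+t$ by balls of radius $r$ into a Euclidean cover of $B^H(F+t)$ by balls of radius $C_{K,\eta}\,r^{1-\eta}$, so $\dimH B^H(F+t)\leq(1-\eta)^{-1}\dimH^\rho F$ simultaneously in $t$; letting $\eta\downarrow 0$ along rationals and combining with the trivial $\leq d$ yields the upper bound $\min\brc{d,\dimH^\rho F}$ on a single full-probability event, valid for every Borel $F$ and every $t$.

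\textbf{Lower bound for a fixed Borel set.} Since $\Rpe^N=\bigcup_n \ivff{1/n,n}^N$, it suffices to argue for $F\subseteq K_n\eqdef\ivff{1/n,n}^N$ and $t\in\ivff{0,T}^N$ for each $n$ and $T$. Fix a rational $\gamma<\min\brc{d,\dimH^\rho F}$ and, via Frostman's lemma applied to the metric space $(K_n,\rho)$, choose a probability measure $\mu$ supported on $F$ with finite energy $I_\gamma^\rho(\mu)\eqdef\iint\rho(s,s')^{-\gamma}\dt\mu(s)\dt\mu(s')<\infty$. Let $\nu_t$ denote the image of $\mu$ under $s\mapsto B^H(s+t)$, carried by $B^H(F+t)$. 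The shifted points $s+t,s'+t$ remain in a fixed compact subset of $\Rpe^N$, so~\eqref{eq:sec_local_nondet1} specialized to a single conditioning point yields $\varb{B^H(s+t)-B^H(s'+t)}\geq c\sum_\ell|s_\ell-s'_\ell|^{2H_\ell}$; combined with the standard Gaussian moment $\esp{|Z|^{-\gamma}}\leq c_\gamma\,\sigma^{-\gamma}$ valid for $Z\sim\Ni(0,\sigma^2 I_d)$ and $\gamma<d$, and the equivalence of $\ell^1$ and $\ell^2$ norms on $\R^N$, this produces
\[
  \esp{ \absb{B^H(s+t)-B^H(s'+t)}^{-\gamma} } \leq C\,\rho(s,s')^{-\gamma}
\]
uniformly over $s,s'\in K_n$ and $t\in\ivff{0,T}^N$. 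A Fubini argument then bounds the expected integrated $\gamma$-energy of $\nu_t$ by $C_T\,I_\gamma^\rho(\mu)<\infty$, so on a full-probability event the Euclidean $\gamma$-energy of $\nu_t$ is finite for a.e.~$t$, whence $\dimH B^H(F+t)\geq\gamma$ for a.e.~$t$.

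\textbf{Uniformization over $F$.} This is the main obstacle: the event produced above depends on the measure $\mu=\mu_F$, whereas the statement demands a single event valid for every Borel $F$. No countable family of measures can exhaust all Borel sets of positive $\rho$-$\gamma$-Hausdorff content, so the first-moment bound must be upgraded into a control that is uniform over admissible measures. This is the role of the refined sectorial LND announced in the abstract: beyond the pointwise minoration~\eqref{eq:sec_local_nondet1}, it is expected to deliver higher-moment or sup-type estimates on the random bilinear functional $\mu\mapsto\iint\absb{B^H(s+t)-B^H(s'+t)}^{-\gamma}\dt\mu(s)\dt\mu(s')$ that are dominated, uniformly in $\mu$, by a multiple of the deterministic energy $I_\gamma^\rho(\mu)$. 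Combined with a Borel--Cantelli argument along a countable dyadic family of candidate measures on $K_n$ and with the inner approximation of Borel sets by compact subsets realizing their $\rho$-Hausdorff dimension, such an estimate assembles a single event $\Omega_0$ on which the claimed identity holds for every Borel $F\subseteq K_n$; intersecting over $n$, $T\in\N$, and rational $\gamma$ completes the argument.
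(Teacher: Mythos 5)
Your upper bound and your fixed-$F$ lower bound are fine, but the third step --- the only genuinely hard part of the theorem --- is not a proof: you correctly identify that the exceptional null set must not depend on $F$, and you then postulate, without constructing it, a ``sup-type estimate on the random bilinear functional uniformly in $\mu$'' together with a ``countable dyadic family of candidate measures''. As you yourself observe, no countable family of measures exhausts the Borel sets, and nothing in your sketch explains how a countable family plus inner regularity would recover an estimate valid for an arbitrary Frostman measure. This is precisely the gap Kaufman's method is designed to close, and it is closed in a different way.

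The actual mechanism is to uniformize over pairs of \emph{points} rather than over measures. One introduces the occupation-type quantity
\[
  I(x,y,R)=\int_{\ivff{\eps,1}^N}\indi_{\ivff{-1,1}}\pthb{R\,\normb{B^H(x+t)-B^H(y+t)}}\,\dt t ,
\]
and proves (Lemma~\ref{lemma:moments_IxyR}) the high-moment bound $\espb{I(x,y,R)^p}\leq c_0^p (p!)^N R^{-dp}\rho(x,y)^{-dp}$; this is where the anisotropic LND of Proposition~\ref{prop:ani_lnd_fbs} enters, namely to control the conditional law of $B^H(x+t^p)-B^H(y+t^p)$ given the values at $t^1,\dotsc,t^{p-1}$ --- not to control any functional of $\mu$. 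Borel--Cantelli over a countable dense set of pairs $(x,y)$ together with the continuity of $B^H$ then yields a single full-probability event on which $I(x,y,2^n)\leq c\,n^N 2^{-nd}\rho(x,y)^{-d}$ for \emph{all} $x,y$ and all $n\geq n_0(\omega)$. On that event, for an arbitrary Borel set $F$ and an arbitrary Frostman measure $\mu$ on $F$, the quantity $\iint\int\norm{B^H(x+t)-B^H(y+t)}^{-\eta}\,\dt t\,\mu(\dt x)\mu(\dt y)$ is bounded \emph{deterministically}: one writes $\norm{\cdot}^{-\eta}=\eta\int_0^\infty\indi_{\ivff{-1,1}}(R\,\norm{\cdot})\,R^{\eta-1}\dt R$, applies Fubini in $t$ and $R$, and inserts the pathwise bound on $I(x,y,R)$, so no further probabilistic input --- hence no $\mu$-dependent event --- is needed. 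Your first-moment computation cannot be upgraded into this, because the $(p!)^N$ growth of the moments (giving exponential integrability of $I(x,y,R)$) is exactly what makes the Borel--Cantelli step over $(x,y)$ work; that is the missing idea.
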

In addition, we also extend the result of \citet{Wu.Xiao-2007a} related to Lebesgue measure of image sets.
\begin{theorem}  \label{th:lebesgue_fbs}
  Let $B^H$ be a fractional Brownian sheet and suppose $d<\sum_{\ell=1}^N \frac{1}{H_\ell} $. Then, with probability $1$, for every Borel set $F\subset\ivoo{0,\infty}^N$ such that $\dimH^\rho F > d$,
  \[
    \lambda_d(B^H(F+t))>0  \quad\text{for almost all } t\in\R_+^N,
  \]
  where $\lambda_d$ denotes the Lebesgue measure on $\R^d$.
\end{theorem}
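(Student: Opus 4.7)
I would follow the classical Kaufman--Mattila potential-theoretic approach combined with the sectorial local-nondeterminism. By a countable decomposition (writing $F = \bigcup_{k\geq 1} F \cap [k^{-1},k]^N$ and covering $\R_+^N$ by cubes), it is enough to prove the statement with $F$ contained in a fixed compact cube $I = [\eps, M]^N \subset \ivoo{0,\infty}^N$ and $t$ restricted to a cube $T = [a,b]^N \subset \R_+^N$ with $\eps, a > 0$. Since $\dimH^\rho F > d$, Frostman's lemma in the anisotropic metric $\rho$ gives, for some $s \in \ivoo{d, \dimH^\rho F}$, a Borel probability measure $\mu$ supported on $F$ with finite $s$-energy $E_s^\rho(\mu) = \iint \rho(u,v)^{-s}\, d\mu(u)\, d\mu(v) < \infty$; since $\rho$ is bounded on $I \times I$ and $d < s$, one automatically gets $E_d^\rho(\mu) < \infty$.

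\textbf{Energy estimate.} To prove $\lambda_d(B^H(F+t)) > 0$ it suffices to show that the random image measure $\nu_t^\omega \eqdef (B^H(\cdot + t))_*\mu$, which is supported on $B^H(F+t)$ with total mass $\mu(F) = 1$, is absolutely continuous with respect to $\lambda_d$. By Plancherel's theorem, this is implied by $\int_{\R^d}\absb{\hat{\nu}_t(\xi)}^2\, d\xi < \infty$, so the target estimate is
\begin{equation*}
  \Esp \int_T \int_{\R^d}\absb{\hat{\nu}_t(\xi)}^2\, d\xi\, dt = (2\pi)^{d/2}\int_T \iint \sigma_t(u,v)^{-d}\, d\mu(u)\, d\mu(v)\, dt < \infty,
\end{equation*}
where $\sigma_t(u,v)^2 \eqdef \Esp[(B_0^H(u+t)-B_0^H(v+t))^2]$ and the equality follows from Fubini, independence of the $d$ components of $B^H$, and the Gaussian characteristic function. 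Applying the sectorial local-nondeterminism of Equation~\eqref{eq:sec_local_nondet1} with $n=1$ to the points $u+t, v+t \in [\eps, M+b]^N$ yields $\sigma_t(u,v)^2 \geq c_0\sum_\ell\abs{u_\ell-v_\ell}^{2H_\ell} \geq (c_0/N)\,\rho(u,v)^2$, so the integral is dominated by $C\lambda_N(T)\cdot E_d^\rho(\mu) < \infty$. This delivers, for the fixed $F$, the statement $\lambda_d(B^H(F+t)) > 0$ for $\lambda_N$-a.e. $t \in T$, almost surely.

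\textbf{Main obstacle: uniformity over $F$.} The hard part will be upgrading the fixed-$F$ statement above, whose $\Pr$-null exceptional set depends on $F$, into a single almost-sure event on which the conclusion holds simultaneously for \emph{every} admissible Borel set. I expect the refinement of the sectorial LND property introduced earlier in the paper to be precisely what drives this uniformity: it should yield an almost-sure, uniform version of the energy estimate valid simultaneously for all Frostman measures $\mu$ with $E_d^\rho(\mu) < \infty$ supported in $I$. Once such a uniform bound is in place, the inner regularity of the Hausdorff dimension (reducing to compact subsets $K \subseteq F$) together with a countable exhaustion over $I$ and $T$ would transfer the conclusion to every Borel $F \subset \ivoo{0,\infty}^N$ with $\dimH^\rho F > d$, and a standard Fubini argument would conclude.
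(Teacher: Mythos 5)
Your setup and your fixed-$F$ argument are correct: the reduction to compact cubes, the Frostman measure with $E_d^\rho(\mu)<\infty$, the Plancherel criterion, and the first-moment computation $\Esp\int_T\int_{\R^d}\abs{\hat\nu_t(\xi)}^2\,\dt\xi\,\dt t \leq C\,E_d^\rho(\mu)$ via the two-point variance lower bound all go through (for this step the full sectorial LND is not even needed, only $\Esp[(B_0^H(u)-B_0^H(v))^2]\geq c\,\rho(u,v)^2$). But this only yields an exceptional null set depending on $F$, and you yourself flag that the upgrade to a single almost-sure event valid for \emph{every} Borel $F$ is "the hard part" which you "expect" the refined LND to handle. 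That expectation is left entirely unsubstantiated, and it is precisely where all the work of the theorem lies; as stated, the proposal reproves the classical fixed-$F$ result and stops.

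Concretely, the missing ingredient is a pathwise estimate, uniform in the pair of points and in the dyadic scale, of the form: almost surely, for all $n\geq n(\omega)$ and all $x,y$ with $\rho(x,y)\geq c\,n^N2^{-n}$,
\[
  J(x,y,n)\;\eqdef\;\int_{\ivff{0,1}^N}\widehat\psi\pthb{2^nB^H(x+t)-2^nB^H(y+t)}\,\dt t\;\leq\;(2+\beta)^{-n}\,\rho(x,y)^{-d}
\]
for some $\beta>0$, where $\psi$ is a bump supported on a dyadic annulus. The point is that the decay rate $(2+\beta)^{-n}$ must \emph{strictly beat} the factor $2^{n}$ coming from the dyadic decomposition of $\int_{\abs{u}>1}\abs{\hat\nu_t(u)}^2\,\dt u$, so that $\sum_n 2^n(2+\beta)^{-n}\iint\rho(x,y)^{-d}\,\mu(\dt x)\mu(\dt y)<\infty$ holds simultaneously for every admissible $\mu$. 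Obtaining this requires high-moment bounds $\esp{J(x,y,n)^{2p}}\leq c^p n^{cp}(2+\beta)^{-2np}\rho(x,y)^{-2pd}$ followed by Borel--Cantelli and continuity; and the moment bound itself is where the refined anisotropic LND actually enters, through the extension of the key moment lemma to exponents $\alpha\in\ivoo{d,\sum_\ell 1/H_\ell}$, a splitting of the integration domain into well-separated and clustered configurations of the $t^j$, and a H\"older inequality trading the small Lebesgue measure of the clustered set against the $\alpha$-moment. An "a.s. uniform energy estimate over all Frostman measures" is not a mechanism one can invoke directly: the uniformization has to be carried out at the level of pairs $(x,y)$ and scales $n$, and none of that analysis is present in the proposal.
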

The proof of the two previous results rely on the introduction in Proposition~\ref{prop:ani_lnd_fbs} (Section~\ref{sec:lnd}) of an anisotropic local non-determinism property different from sectorial LND. The former then allows to adapt the seminal methods of \citet{Kaufman-1989} to fractional Brownian sheets satisfying $d<\sum_{\ell=1}^N \frac{1}{H_\ell}$. Note that we hope that this anisotropic LND property can also be of independent interest to the study of remaining open questions on the fractal geometry of fractional Brownian sheets and more general anisotropic Gaussian random fields.

\section{Anisotropic local nondeterminism}  \label{sec:lnd}

The local nondeterminism property has historically been introduced by \citet{Berman-1973} in the study of local times of Gaussian processes. Since then, it has been widely and successfully used to obtain multiple fine sample paths properties of Gaussian processes, including small balls probabilities, level sets and Hausdorff dimension of graphs and image sets. We refer to the surveys of \citet{Geman.Horowitz-1980,Xiao-2006} for a more precise overview on the subject.

As previously outlined, the Brownian sheet, and thereby fractional Brownian sheets, are known to be non locally non-deterministic and \citet{Khoshnevisan.Xiao-2007} have introduced the \emph{sectorial local nondeterminism} property in order to still being able to investigate distributional properties of this class of processes. The simplest form has been presented in the introduction, Equation~\eqref{eq:sec_local_nondet1}. In order to investigate uniform dimension of image sets, one needs an analogue of the former on increments. Namely, \citet{Wu.Xiao-2007a} have proved that any fractional Brownian sheet satisfies for every $s,t,s^1,\dotsc,s^n\in\ivfo{\eps,+\infty}$,
\begin{align}  \label{eq:sec_local_nondet2}
  &\varcb{ B^H(s) - B^H(t) }{ B^H(s^1),\dotsc,B^H(s^n) } \nonumber \\
  &\geq c_1 \sum_{\ell=1}^N \min\brcbb{ \min_{1\leq j\leq n} \absb{ s_\ell - s^j_\ell }^{2 H_\ell} + \min_{1\leq j\leq n} \absb{ t_\ell - s^j_\ell }^{2 H_\ell}, \abs{s_\ell-t_\ell}^{2 H_\ell} },
\end{align}
where the constant $c_1>0$ only depends on $\eps$.

Nevertheless, it appears in the work of \citet{Khoshnevisan.Wu.ea-2006} that the previous \emph{sectorial local nondeterminism} property is not sufficiently fine to extend the result~\eqref{eq:unif_fbs_aa} to any fractional Brownian sheet satisfying $d<\sum_{\ell=1}^N \frac{1}{H_\ell}$. Consequently, we present in the following proposition a refinement of the former.
\begin{proposition}  \label{prop:ani_lnd_fbs}
  Suppose $B^H$ is a fractional Brownian sheet and $\eps>0$. Then, there exists a constant $c_{0}>0$ such that for every $t,s,s^1,\dotsc,s^n\in\ivfo{\eps,1}^N$.
  \begin{align}  \label{eq:ani_lnd_fbs}
    \varcb{ B^H(t) - B^H(s) }{ B^H(s^1),\dotsc,B^H(s^n) }
    \geq c_0\, \rho(s,t)^2 \cdot \sum_{\ell=1}^N r_\ell^{2H_\ell},
  \end{align}
  where for any $\ell\in\brc{1,\dotsc,N}$, we define
  \[
    r_\ell \eqdef \min_{1\leq j\leq n} \abs{ s_\ell - s^j_\ell } + \min_{1\leq j\leq n} \abs{ t_\ell - s^j_\ell }.
  \]
\end{proposition}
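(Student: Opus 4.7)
The plan is to use the moving-average representation~\eqref{eq:fbs_int_rep} to rewrite the conditional variance on the left of~\eqref{eq:ani_lnd_fbs} as a squared $L^2$-distance, and then construct explicit test directions witnessing the claimed lower bound. Writing $K(t,u) \eqdef \prod_{\ell=1}^N K_\ell(t_\ell,u_\ell)$ with $K_\ell(x,u) = (x-u)_+^{H_\ell-1/2}-(-u)_+^{H_\ell-1/2}$, one has $B^H(t) = \int_{\R^N} K(t,u)\,\dt W_u$ and hence
\[
\varcb{ B^H(t) - B^H(s) }{ B^H(s^1),\dotsc,B^H(s^n) } = \inf_{(a_j)}\normB{K(t,\cdot)-K(s,\cdot)-\sum_{j=1}^n a_j K(s^j,\cdot)}_{L^2(\R^N)}^2.
\]
By Hilbert space duality, this equals the supremum of $\scpr{\phi}{K(t,\cdot)-K(s,\cdot)}^2$ over unit-norm $\phi\in L^2(\R^N)$ orthogonal to every $K(s^j,\cdot)$. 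If $\phi^{(1)},\dotsc,\phi^{(N)}$ are $N$ such test functions with pairwise disjoint supports, the conditional variance is at least $\sum_{\ell=1}^N \scpr{\phi^{(\ell)}}{K(t,\cdot)-K(s,\cdot)}^2$. The strategy will therefore be to exhibit, for each coordinate $\ell$, such a $\phi^{(\ell)}$ satisfying $\absb{\scpr{\phi^{(\ell)}}{K(t,\cdot)-K(s,\cdot)}} \geq c\,\rho(s,t)\,r_\ell^{H_\ell}$.

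For the construction I would work coordinate-by-coordinate. For each $\ell$, a pigeonhole argument among $s_\ell$, $t_\ell$ and the points $\brc{s^j_\ell}$, together with the definition of $r_\ell$, yields an interval $J_\ell \subset \ivoo{\eps,1}$ of length proportional to $r_\ell$, disjoint from every $s^j_\ell$ and adjacent to $s_\ell$ or $t_\ell$. For the other coordinates $k\ne\ell$, I would fix auxiliary intervals $J_k^{(\ell)} \subset \ivoo{0,\eps/2}$, a regime where each one-dimensional kernel $K_k(x,\cdot)$ is smooth in $u_k$ for every $x\in\ivfo{\eps,1}$, chosen so that the product boxes $J_1^{(\ell)}\times\cdots\times J_N^{(\ell)}$ are pairwise disjoint across different $\ell$'s. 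I would then take $\phi^{(\ell)}(u) = c_\ell \prod_{k=1}^N \psi_k^{(\ell)}(u_k)$, with each $\psi_k^{(\ell)}$ a smooth function compactly supported in the corresponding interval, endowed with sufficiently many vanishing moments, and $c_\ell$ a normalizing constant. The product structure of $K$ gives $\scpr{\phi^{(\ell)}}{K(s^j,\cdot)} = c_\ell\prod_{k=1}^N \scpr{\psi_k^{(\ell)}}{K_k(s^j_k,\cdot)}$; since $J_\ell$ avoids every $s^j_\ell$, the factor $K_\ell(s^j_\ell,\cdot)$ is smooth on $J_\ell$, and enough vanishing moments make the $\ell$-th inner-product factor arbitrarily small uniformly in $j$ (a small correction by projection onto $\Si^\perp$, with $\Si\eqdef\spanv\brc{K(s^j,\cdot)}$, then lands $\phi^{(\ell)}$ exactly in $\Si^\perp$ without affecting the estimates below).

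To control the inner product with the increment, I would telescope along the coordinates,
\[
K(t,u)-K(s,u) = \sum_{m=1}^N \prod_{k<m} K_k(s_k,u_k)\cdot\bktb{K_m(t_m,u_m)-K_m(s_m,u_m)}\cdot\prod_{k>m} K_k(t_k,u_k),
\]
so that $\scpr{\phi^{(\ell)}}{K(t,\cdot)-K(s,\cdot)}$ decouples into a sum indexed by $m$ of products of one-dimensional inner products. A one-dimensional analysis using the Hölder-type estimate $\absb{K_m(t_m,u_m)-K_m(s_m,u_m)}\lesssim \abs{s_m-t_m}^{H_m}$ for $u_m$ away from the singularity shows the $m$-th term is of order $\abs{s_m-t_m}^{H_m}$ times a bounded factor involving the coordinates $k\ne m$; in particular, the factor coming from coordinate $k=\ell$ carries the additional $r_\ell^{H_\ell}$ scale, inherited from the localization of $\psi_\ell^{(\ell)}$ in an interval of width $r_\ell$. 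Provided the test functions are normalized so that the remaining auxiliary factors are of order unity, summing over $m$ produces $\absb{\scpr{\phi^{(\ell)}}{K(t,\cdot)-K(s,\cdot)}} \gtrsim \rho(s,t)\cdot r_\ell^{H_\ell}$, and squaring and summing over $\ell$ then yields~\eqref{eq:ani_lnd_fbs}.

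The main difficulty will be the tension between two competing constraints on the test functions: enforcing orthogonality to all $n$ conditioning kernels $K(s^j,\cdot)$ \emph{uniformly in $n$} (which pushes toward narrow supports and stringent vanishing-moment conditions) while retaining an inner product with $K(t,\cdot)-K(s,\cdot)$ that captures the \emph{full} $\rho(s,t)$ factor rather than merely one of its single-coordinate summands $\abs{s_\ell-t_\ell}^{H_\ell}$. The product structure $\rho(s,t)^2 \cdot \sum_\ell r_\ell^{2H_\ell}$ is precisely what is missing from the sectorial estimate~\eqref{eq:sec_local_nondet2} and what will be needed in Theorems~\ref{th:weak_unif_dim_fbs} and~\ref{th:lebesgue_fbs}; recovering it hinges on a careful quantitative analysis of the one-dimensional inner products $\scpr{\psi_k^{(\ell)}}{K_k(t_k,\cdot)-K_k(s_k,\cdot)}$ in every coordinate and on checking that the contributions of the $N$ terms in the telescoping decomposition combine constructively rather than destructively.
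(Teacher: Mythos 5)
Your starting point---rewriting the conditional variance via the moving-average representation~\eqref{eq:fbs_int_rep} as a squared $L^2$-distance and bounding it below by inner products against test functions orthogonal to the conditioning kernels $K(s^j,\cdot)$---is exactly the paper's. The construction you then propose, however, has a gap at its core. You enforce orthogonality to the $K(s^j,\cdot)$ only \emph{approximately}, by giving $\psi^{(\ell)}_\ell$ many vanishing moments and invoking the smoothness of $u_\ell\mapsto(s^j_\ell-u_\ell)_+^{H_\ell-1/2}$ on $J_\ell$. This fails precisely at the critical scale: since the claimed factor $r_\ell^{H_\ell}$ forces $\absb{J_\ell}\sim r_\ell$, and since by the very definition of $r_\ell$ the nearest conditioning point $s^j_\ell$ to $s_\ell$ or $t_\ell$ lies within distance $O(r_\ell)$ of $J_\ell$, for such $j$ the $M$-th derivative of the kernel on $J_\ell$ is of order $r_\ell^{H_\ell-1/2-M}$ and the Taylor/moment estimate gives only $\absb{\scpr{\psi^{(\ell)}_\ell}{K_\ell(s^j_\ell,\cdot)}}\lesssim r_\ell^{1/2}\cdot r_\ell^{M}\cdot r_\ell^{H_\ell-1/2-M}=r_\ell^{H_\ell}$---the same order as the signal term, no matter how large $M$ is. Consequently the ``small correction by projection onto $\Si^\perp$'' is not small, and nothing controls it uniformly in $n$ (the Gram matrix of the $K(s^j,\cdot)$ is not uniformly well conditioned). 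The paper's proof avoids this entirely by choosing a kernel that is \emph{exactly} orthogonal: the fractional second difference $h_k(u_k)=(u_k-t_k+r_k)_+^{1/2-H_k}+(u_k-t_k-r_k)_+^{1/2-H_k}-2(u_k-t_k)_+^{1/2-H_k}$ annihilates every $(s^j_k-u_k)_+^{H_k-1/2}$ because $\int(x-u)_+^{H-1/2}(u-y)_+^{1/2-H}\,\dt u$ is linear in $x$ (a Beta-function identity). That exact cancellation is the idea your argument is missing, and it cannot be replaced by vanishing moments.

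A second unresolved point is structural: you ask a single $\phi^{(\ell)}$ to capture the full factor $\rho(s,t)$, and you acknowledge but do not resolve the possible destructive interference among the $N$ terms of your telescoping decomposition (note moreover that for $m\neq\ell$ the vanishing moments of $\psi^{(\ell)}_\ell$ tend to kill the very factor you want to be of order $r_\ell^{H_\ell}$, since $K_\ell(s_\ell,\cdot)$ or $K_\ell(t_\ell,\cdot)$ is smooth on $J_\ell$ whenever $J_\ell$ sits near the other time point). The paper proceeds differently: it proves the stronger pairwise bound~\eqref{eq:ani_lnd_fbs2}, using for each pair $(k,i)$ with $i\neq k$ a separate test function whose $i$-th factor is $\indi_{\ivff{s_i,t_i}}$; this makes $\scpr{h}{K(s,\cdot)}$ vanish identically, so only the single term $\scpr{h}{K(t,\cdot)}$ survives and no cancellation can occur, while the diagonal contribution $r_\ell^{2H_\ell}\abs{s_\ell-t_\ell}^{2H_\ell}$ comes for free from the sectorial LND~\eqref{eq:sec_local_nondet2} via $\min\brc{a,b}\geq ab$ for $a,b\leq 1$. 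I would rework your construction around an exactly orthogonal second-difference kernel and a per-pair rather than per-coordinate decomposition.
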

\begin{proof}
  We aim to prove a property slightly stronger than Equation~\eqref{eq:ani_lnd_fbs}. Namely, for every $t,s,s^1,\dotsc,s^n\in\ivfo{\eps,1}^N$,
  \begin{align}  \label{eq:ani_lnd_fbs2}
    &\varcb{ B^H(t) - B^H(s) }{ B^H(s^1),\dotsc,B^H(s^n) } \nonumber \\
    &\geq c_0 \sum_{\ell=1}^N \min\brcb{ r_\ell^{2H_\ell}, \abs{s_\ell - t_\ell}^{2H_\ell} } +
    c_0 \sum_{\ell=1}^N r_\ell^{2H_\ell} \cdot \brcbb{ \sum_{i\neq \ell} \abs{ s_i - t_i }^{2H_i} }.
  \end{align}
  We easily observe that since $\min\brcb{ r_k^{2H_k}, \abs{s_k - t_k}^{2H_k} }\geq r_k^{2H_k}\cdot\abs{s_k - t_k}^{2H_k}$, the former clearly induces Inequality~\eqref{eq:ani_lnd_fbs}. In addition, in the two components appearing in Equation~\eqref{eq:ani_lnd_fbs2}, the first one is a clear consequence of the sectorial LND property \eqref{eq:sec_local_nondet2}, since $r_\ell = \min_{j\leq n} \abs{ s_\ell - s^j_\ell } + \min_{j\leq n} \abs{ t_\ell - s^j_\ell }$.

  Hence, we may focus on the second part, and, set $k\in\brc{1,\dotsc,N}$ and $i\neq k$. Without any loss of generality, we may assume that $t_i - s_i\geq 0$ (unless, simply permute $t$ and $s$).
  If $\abs{s_k-t_k} \geq r_k$, we simply observe that
  \begin{align*}
    \min\brcb{ r_k^{2H_k}, \abs{s_k - t_k}^{2H_k} } = r_k^{2H_k} \geq r_k^{2H_k}\cdot \abs{s_i - t_i}^{2H_i}.
  \end{align*}
  The combination of the previous remark and the sectorial LND property yield the expected inequality. Therefore, we may assume in the sequel that $\abs{s_k-t_k} < r_k$.

  Owing to the integral representation \eqref{eq:fbs_int_rep} of fractional Brownian sheets, we know that
  \begin{align*}
    &\varcb{ B^H(t)-B^H(s) }{ B^H(s^1),\dotsc,B^H(s^n) } \\
    &= \inf_{\alpha\in\R^n} \espbb{ \pthB{ B^H(t)-B^H(s) - \sum_{j=1}^n \alpha_j B^H(s^j) }^2
     } \\
    &= \inf_{\alpha\in\R^n} \int_{\R^N} \pthbb{ K(u,t,H) - K(u,s,H) - \sum_{j=1}^n \alpha_j K(u,s^j,H) }^2 \dt u.
  \end{align*}
  where $K(u,t,H) \eqdef \prod_{\ell=1}^N \brcb{ (t_\ell-u_\ell)_+^{H_\ell-1/2} - (-u_\ell)_+^{H_\ell-1/2}}$. The previous expression of the conditional variance can be lower bounded by
  \begin{align*}
    \inf_{\alpha\in\R^n} \int_{\R_+^N} \pthbb{ \prod_{\ell=1}^N (t_\ell-u_\ell)_+^{H_\ell-1/2} - \prod_{\ell=1}^N (s_\ell-u_\ell)_+^{H_\ell-1/2} - \sum_{j=1}^n \alpha_j \prod_{\ell=1}^N (s^j_\ell-u_\ell)_+^{H_\ell-1/2} }^2 \dt u.
  \end{align*}
  In order to obtain a uniform lower bound of the previous expression, the main idea is to exhibit an element $h\in L^2$ which is orthogonal to the family of functions $\prod_{\ell=1}^N (s^j_\ell-u_\ell)_+^{H_\ell-1/2}$. Hence, let us define
  \begin{align*}
    h(u) = h_k(u_k) \cdot
    \indi_{\ivff{s_i,t_i}}(u_i) \cdot \prod_{\ell\neq i,k} \indi_{\ivff{0,\eps}}(u_\ell).
  \end{align*}
  where
  $h_k(u_k) = (u_k-t_k+r_k)_+^{1/2-H_k} + (u_k-t_k-r_k)_+^{1/2-H_k} - 2(u_k-t_k)_+^{1/2-H_k}$. Note that the support of the function $h$ is included in the set $\ivfo{t_k-r_k,\infty}\times\ivff{s_i,t_i}\times\ivff{0,\eps}^{N-2}$ (up to a permutation of variables). In addition, when $H_k = 1/2$, $h_k$ simply corresponds to the difference $\indi_{\ivfo{t_k-r_k,t_k}}-\indi_{\ivfo{t_k,t_k+r_k}}$.
  Let us prove $h$ is well-designed for our purpose by evaluating the scalar product, for any fixed $j\in\brc{1,\dots,n}$:
  \begin{align*}
    \scprbb{ h }{ \prod_{\ell=1}^N (s^j_\ell-u_\ell)_+^{H_\ell-1/2} } =
    c \int_{\R_+} (s^j_k-u_k)_+^{H_k-1/2} \cdot h_k(u_k) \,\dt u_k,
  \end{align*}
  where $c$ corresponds to the integration over variables $u_\ell$, $\ell\neq k$. We need to distinguish two different cases, depending on the value of $s^j_k$.
  \begin{enumerate}[ \it 1.]
    \item If $s^j_k < t_k - r_k$, $(s^j_k-u_k)_+^{H_k-1/2}$ and $h_k$ have disjoint supports (respectively $\ivff{0,s^j_k}$ and $\ivfo{t_k-r_k,\infty}$), and therefore, the inner product is clearly equal to zero.\vsp

    \item If $s^j_k \geq t_k - r_k$,
    \begin{align*}
      \scprbb{ h }{ \prod_{\ell=1}^N (s^j_\ell-u_\ell)_+^{H_\ell-1/2} }
      = c\,\int_{\ivff{t_k-r_k,s^j_k}} (s^j_k-u_k)^{H_k-1/2} \cdot h_k(u_k) \,\dt u_k.
    \end{align*}
    Let $a$ denotes either $0$, $-r_k$ or $r_k$. Then,
    \begin{align*}
      &\int_{\ivff{t_k+a,s^j_k}} (s^j_k-u_k)^{H_k-1/2} \cdot (u_k - t_k - a)^{1/2-H_k} \,\dt u_k \\
      &= (s^j_k-t_k-a) \cdot \int_{\ivff{0,1}} v_k^{H_k-1/2} (1-v_k)^{1/2-H_k} \,\dt v_k,
    \end{align*}
    using the change of variable $v_k = (s^j_k-u_k) / (s^j_k-t_k-a)$. Consequently,
    \begin{align*}
      \scprbb{ h }{ \prod_{\ell=1}^N (s^j_\ell-u_\ell)_+^{H_\ell-1/2} }
      = c\brcB{ (s^j_k-t_k-r_k) + (s^j_k-t_k+r_k) - 2(s^j_k-t_k) } = 0.
    \end{align*}
  \end{enumerate}
  The function $h$ is orthogonal to any $\prod_{\ell=1}^N (s^j_\ell-u_\ell)_+^{H_\ell-1/2}$, and therefore to the linear space spanned by the previous collection. As a consequence,
  \begin{align*}
    &\varcb{ B^H(t)-B^H(s) }{ B^H(s^1),\dotsc,B^H(s^n) } \\
    &\geq \frac{ 1 }{ \norm{h}^2_{L^2} } \scprbb{ h }{ \prod_{\ell=1}^N (t_\ell-u_\ell)_+^{H_\ell-1/2} - \prod_{\ell=1}^N (s_\ell-u_\ell)_+^{H_\ell-1/2} }^2 \\
    &= \frac{ 1 }{ \norm{h}^2_{L^2} } \scprbb{ h }{ \prod_{\ell=1}^N (t_\ell-u_\ell)_+^{H_\ell-1/2} }^2,
  \end{align*}
  since the support of $h$ does not intersect $\ivff{0,s}$ due to the component $\indi_{\ivff{s_i,t_i}}(u_i)$ in the former. Let us first estimate the norm $\norm{h}^2_{L^2}$:
  \begin{align*}
    \norm{h}^2_{L^2}
    &= \eps^{N-2} \cdot \abs{s_i-t_i} \\
    &\cdot \int_{\R} \brcB{ (u_k-t_k+r_k)_+^{1/2-H_k} + (u_k-t_k-r_k)_+^{1/2-H_k} - 2(u_k-t_k)_+^{1/2-H_k} }^2 \dt u_k \\
    &= \eps^{N-2} \cdot \abs{s_i-t_i} \cdot r_k^{2-2H_k}\int_{\R_+} \brcB{ v_k^{1/2-H_k} + (v_k-2)_+^{1/2-H_k} - 2(v_k-1)_+^{1/2-H_k} }^2 \dt v_k\\
    &= c_0 \,\abs{s_i-t_i} \cdot r_k^{2-2H_k}.
  \end{align*}
  The previous integral is finite since $1-2H_k > -1$ and $v_k^{1/2-H_k} + (v_k-2)_+^{1/2-H_k} - 2(v_k-1)_+^{1/2-H_k}\sim_{\infty} v_k^{-1-2H_k}$. On the other hand, the inner product is equal to
  \begin{align*}
     \scprbb{ h }{ \prod_{\ell=1}^N (t_\ell-u_\ell)_+^{H_\ell-1/2} }
     &= \prod_{\ell\neq i,k} \int_{0}^{\eps} (t_\ell-u_\ell)^{H_i-1/2} \,\dt u_\ell
     \int_{s_i}^{t_i} (t_i-u_i)^{H_i-1/2} \,\dt u_i \\
     & \times \int_{t_k-r_k}^{t_k} (t_k-u_k)^{H_k-1/2} (u_k-t_k+r_k)^{1/2-H_k} \,\dt u_k \\
     &\geq c_2\, \abs{s_i-t_i}^{H_i+1/2} \cdot r_k.
  \end{align*}
  still using a similar change of variables and observing that $c_2>0$ only depends on $\eps$. Hence, we eventually obtain
  \begin{align*}
    \varcb{ B^H(t)-B^H(s) }{ B^H(s^1),\dotsc,B^H(s^n) } \geq c_3\, r_k^{2H_k} \cdot \abs{s_i-t_i}^{2H_i},
  \end{align*}
  where the constant $c_3>0$ only depends on $N$, $\eps$ and $H$. This last inequality then clearly leads to the second term in Equation~\eqref{eq:ani_lnd_fbs2}.
\end{proof}

\begin{remark}
  We may note that the local nondeterminism property presented in Proposition~\ref{prop:ani_lnd_fbs} is not sensu stricto an extension of the sectorial LND property~\eqref{eq:sec_local_nondet2}. Indeed, one can simply observe that if the two terms $\rho(s,t)^2$ and $\sum_{\ell=1}^N r_\ell^{2H_\ell}$ are of same order, then the sectorial LND bound \eqref{eq:sec_local_nondet2} is tighter.

  \begin{figure}[!ht]
    \includegraphics[scale=1.0]{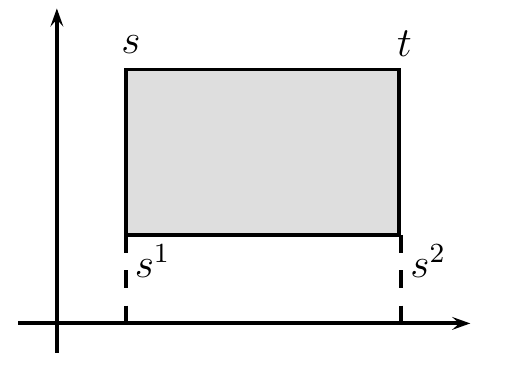}
    \caption{Example of conditional variance $\varcb{ B(t)-B(s) }{ B(s^1),B(s^2) }$}
    \label{fig:ex_rslnd_fbs}
  \end{figure}
  On the other hand, Figure~\ref{fig:ex_rslnd_fbs} illustrates the typical case where the anisotropic LND property~\eqref{eq:ani_lnd_fbs} provides a better estimate than the classic sectorial LND. Namely, if $W$ is a two dimensional Brownian sheet, we easily observe that the bound given by Equation~\eqref{eq:sec_local_nondet2} on $\varcb{ W(t)-W(s) }{ W(s^1),W(s^2) }$ is zero. On the other hand, Proposition~\ref{prop:ani_lnd_fbs} gives an optimal lower bound, proportional to the variance of the term $W(t)-W(s)-W(s^2)+W(s^1)$ (informally equal to $\var{W(\text{``grey area''})}$). This improvement corresponding to some specific geometrical configurations will be the cornerstone in the proofs of Theorems~\ref{th:weak_unif_dim_fbs} and \ref{th:lebesgue_fbs}.
\end{remark}

\begin{remark}
  The calculus presented in the proof of Proposition~\ref{prop:ani_lnd_fbs} offers an alternative way to prove the sectorial LND property. \citet{Wu.Xiao-2007a}, and originally \citet{Kahane-1985}, used estimates on the Fourier representation of fractional Brownian sheets to obtain the lower bound, whereas our proof is based on the classic time integral representation \eqref{eq:fbs_int_rep}.
\end{remark}

\section{Weak uniform Hausdorff dimension of image sets}  \label{sec:results}

Based on the refinement obtained in the previous section, we now extend the weak uniform Hausdorff results presented by \citet{Kaufman-1989}, \citet{Khoshnevisan.Wu.ea-2006} and \citet{Wu.Xiao-2007a}. The structure of the proof of Theorem~\ref{th:weak_unif_dim_fbs} follows the ideas initially described by \citet{Kaufman-1989} and relies mainly on the estimate obtained in the following lemma.
\begin{lemma}  \label{lemma:moments_IxyR}
  Define
  \begin{align*}
    I(x,y,R) = \int_{\ivff{\eps,1}^N} \indi_{\ivff{-1,1}}\pthb{ R\cdot\normb{ B^H(x+t) - B^H(y+t) } } \,\dt t,
  \end{align*}
  Then, for all $R>0$, $x,y\in\ivff{\eps,1}^N$ and integers $p\geq 1$,
  \begin{equation}  \label{eq:lemma_IxyR}
    \espb{ \pthb{ I(x,y,R) }^p } \leq c_0^p (p!)^N R^{-dp} \rho(x,y)^{-dp},
  \end{equation}
  where the constant $c_0$ only depends on $\eps$.
\end{lemma}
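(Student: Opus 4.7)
The plan is to follow Kaufman's strategy \cite{Kaufman-1989}, replacing his one-dimensional LND argument by the anisotropic LND of Proposition~\ref{prop:ani_lnd_fbs}. Set $Y_j = B^H(x+t^j) - B^H(y+t^j)$ for $j=1,\dotsc,p$. By Fubini's theorem and the chain rule for conditional probabilities,
\begin{align*}
  \esp{ I(x,y,R)^p } = \int_{\ivff{\eps,1}^{Np}} \prod_{j=1}^p \prcB{ \normb{Y_j}\leq R^{-1} }{ Y_1,\dotsc,Y_{j-1} } \dt t^1\dotsm \dt t^p.
\end{align*}
Since the $d$ components of $B^H$ are i.i.d., the conditional law of $Y_j$ is a $d$-dimensional Gaussian with i.i.d.\ components of conditional standard deviation $\sigma_j$. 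Combining Anderson's inequality for the non-centered ball with the standard Gaussian small-ball estimate yields
\begin{align*}
  \prcB{ \normb{Y_j}\leq R^{-1} }{ Y_1,\dotsc,Y_{j-1} } \leq c_1\min\brcb{ 1, \pthb{R\sigma_j}^{-d} }.
\end{align*}

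The conditional variance $\sigma_j^2$ is bounded below by conditioning on the richer $\sigma$-algebra generated by $\brc{B^H(x+t^m),B^H(y+t^m): m<j}$, which only decreases it. Applying Proposition~\ref{prop:ani_lnd_fbs} to this richer conditioning, and using translation-invariance $\rho(x+t^j,y+t^j)=\rho(x,y)$, one obtains
\begin{align*}
  \sigma_j^2 \geq c_2\,\rho(x,y)^2 \sum_{\ell=1}^N \pthb{r_\ell^{(j)}}^{2H_\ell},
\end{align*}
where $r_\ell^{(j)}$ is bounded below by the minimum of $\abs{t^j_\ell - t^k_\ell}$ and $\abs{\pm(x_\ell-y_\ell) + t^j_\ell - t^k_\ell}$ over $k<j$. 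Because $d<\sum_\ell 1/H_\ell$, one can select weights $\beta_\ell\in(0,1/d)$ with $\sum_\ell \beta_\ell/H_\ell = 1$, and the weighted arithmetic-geometric inequality gives
\begin{align*}
  \sigma_j^{-d} \leq c_3\,\rho(x,y)^{-d} \prod_{\ell=1}^N \pthb{r_\ell^{(j)}}^{-d\beta_\ell}.
\end{align*}

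The main technical step is the integration that recovers the factor $(p!)^N$. The key combinatorial trick is to bound each minimum by a sum,
\begin{align*}
  \pthb{\min_{k<j}\abs{t^j_\ell - t^k_\ell}}^{-d\beta_\ell} \leq \sum_{k<j} \abs{t^j_\ell - t^k_\ell}^{-d\beta_\ell},
\end{align*}
(and similarly for the shifted terms, costing only a universal constant per factor). Expanding all the minima decomposes the integrand into $\prod_{j=2}^p(j-1)^N = ((p-1)!)^N$ terms indexed by parent functions $(k^{(j)}_\ell)_{j,\ell}$ with $k^{(j)}_\ell\in\brc{1,\dotsc,j-1}$. For each such choice, the integrand factorises across the $N$ coordinates into a rooted-tree structure on $\brc{1,\dotsc,p}$ (node $j$ has parent $k^{(j)}_\ell$, root $1$); integrating the variables $t^j_\ell$ in leaf-first order contributes at each step the bounded quantity $\int_\eps^1 \abs{t-a}^{-d\beta_\ell}\dt t \leq c_4$, since $d\beta_\ell<1$. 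Thus each parent configuration contributes at most $c_4^{Np}$, and summing over all configurations yields the claimed bound with $c_0^p(p!)^N$.

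The main obstacle is the last step: choosing the weights $\beta_\ell$ compatibly with $d<\sum 1/H_\ell$ and tracking the factorial factor through the tree-integration. A subtlety is that the $\min\brc{1,\,\cdot}$ in the Gaussian estimate is needed only near the diagonal where some $r_\ell^{(j)}$ vanishes; the condition $d\beta_\ell<1$ makes the singularity of $r_\ell^{(j)\,-d\beta_\ell}$ integrable, so the unconditional upper bound $c_1(R\sigma_j)^{-d}$ suffices after the AM-GM decomposition.
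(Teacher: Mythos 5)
Your argument is correct, and while the overall skeleton (successive conditioning, Gaussian small-ball bound via Anderson, lower-bounding the conditional variance through Proposition~\ref{prop:ani_lnd_fbs} with the richer conditioning on $\{B^H(x+t^m),B^H(y+t^m):m<j\}$) matches the paper, the way you handle the resulting singular integral is genuinely different. The paper argues by induction on $p$: it freezes $t^1,\dotsc,t^{p-1}$, partitions $\ivff{\eps,1}^N$ into $O(p^N)$ rectangles $I_l$ centred at the points $t^j_k$, $t^j_k\pm(x_k-y_k)$, bounds the conditional standard deviation below by $c\,\rho(x,y)\,\rho(s^l,t^p)$ on each rectangle, and shows $\int_{I_l}\rho(s^l,t^p)^{-d}\,\dt t^p<\infty$ by the change of variables $v_\ell=u_\ell^{H_\ell}$ and spherical coordinates, the hypothesis $d<\sum_\ell 1/H_\ell$ entering as convergence of $\int_0^1 r^{\sum_\ell 1/H_\ell-d-1}\,\dt r$; the factor $p^N$ per step yields $(p!)^N$. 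You instead decouple the coordinates up front via the weighted AM--GM inequality, turning $\bigl(\sum_\ell (r_\ell^{(j)})^{2H_\ell}\bigr)^{-d/2}$ into $\prod_\ell (r_\ell^{(j)})^{-d\beta_\ell}$ with $\sum_\ell\beta_\ell/H_\ell=1$ and $d\beta_\ell<1$ (the uniform choice $\beta_\ell=(\sum_k 1/H_k)^{-1}$ works, and this is exactly where $d<\sum_\ell 1/H_\ell$ is needed), then run Kaufman's min-to-sum and parent-function expansion to get $((p-1)!)^N$ configurations, each integrating to $c^{Np}$ by leaf-first one-dimensional integrations. Both routes are sound; yours makes the role of the dimension condition more transparent and extends verbatim to the $\alpha$-moment variant of Remark~\ref{remark:ineq_ext}, at the cost of the AM--GM constant and the bookkeeping of the tree expansion, whereas the paper's rectangle decomposition keeps the anisotropic metric intact and evaluates the singular integral once. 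Two cosmetic points: your first display should strictly be an iterated conditional expectation rather than a literal product of conditional probabilities (the deterministic variance bounds make this harmless, as in the paper's induction), and the case $j=1$ needs the unconditional bound $\var{B^H(x+t^1)-B^H(y+t^1)}\geq c\,\rho(x,y)^2$, which you implicitly use but do not state.
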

\begin{proof}
  Since $B_1^H,\dotsc,B^H_d$ are independent copies of an $(N,1)$-fractional Brownian sheet $B^H_0$, the $p$-th moment of $I(x,y,R)$ is equal to
  \begin{align*}
    \esp{ (I(x,y,R))^p }
    &= \int_{\ivff{\eps,1}^{Np}} \prb{ \norm{ B^H(x+t^j) - B^H(y+t^j) } \leq R^{-1} , 1\leq j \leq p } \,\dt t^1 \cdots \dt t^p \\
    &= \int_{\ivff{\eps,1}^{Np}} \prb{ \abs{ B_0^H(x+t^j) - B_0^H(y+t^j) } \leq R^{-1} , 1\leq j \leq p }^d \,\dt t^1 \cdots \dt t^p.
  \end{align*}
  We will bound the previous integral by induction on the parameter $p$. Hence, let us fix fix $t^1,\dotsc, t^{p-1} \in\ivff{\eps,1}^N$ and integrate over the variable $t^p$. Note that without any loss of generality, we assume that all coordinates of $t^1,\dotsc, t^{p-1}$ are distinct.

  The distribution of $B_0^H(x+t^p) - B_0^H(y+t^p)$ conditionally to $B_0^H(x+t^j) - B_0^H(y+t^p)$, $j\in\brc{1,\dotsc,p-1}$ is clearly centered and Gaussian. Therefore,
  \begin{align*}
    \Pii(t^p)
    &\eqdef\prcb{ \abs{ B_0^H(x+t^p) - B_0^H(y+t^p) } \leq R^{-1} }{ \abs{B_0^H(x+t^j) - B_0^H(y+t^j)}, 1\leq j\leq p-1 } \\
    &\leq R^{-1} \cdot \varcb{ B_0^H(x+t^p) - B_0^H(y+t^p) }{ B_0^H(x+t^j) - B_0^H(y+t^j), 1\leq j\leq p-1 }^{-1/2} \\
    &\leq R^{-1} \cdot \varcb{ B_0^H(x+t^p) - B_0^H(y+t^p) }{ B_0^H(x+t^j) , B_0^H(y+t^j), 1\leq j\leq p-1 }^{-1/2}.
  \end{align*}
  As the reader may expect, we aim to use the anisotropic LND property~\eqref{eq:ani_lnd_fbs} to bound the integral $\int_{\ivff{\eps,1}^N} \Pii(t^p) \,\dt t^p$. To simplify the former expression, we introduce a collection of rectangles $(I_l)_l$ which forms a partition of $\ivff{\eps,1}^N$ and we split the previous integral accordingly.

  More specifically, define for every $k\in\brc{1,\dotsc,N}$,  $\Si_k = \brcb{ t^j_k, t^j_k+x_k-y_k,t^j_k-x_k+y_k ; 1\leq j\leq p-1 }$. Then, for any index $l=(l_1,\dotsc,l_N)\in\brc{1,\dotsc,3(p-1)}^N$, let $I_l$ be the $N$-dimensional rectangle:
  \[
    I_l = \prod_{k=1}^N \ivfobb{ s^{l_k}_k - \frac{s^{l_k}_k-s^{l_k-1}_k}{2}, s^{l_k}_k + \frac{s^{l_k+1}_k-s^{l_k}_k}{2} },
  \]
  where the elements $(s^{l_i}_k; 1\leq l_i\leq 3(p-1))$ of the set $\Si_k$ are assumed to be increasingly sorted. The collection of rectangles $(I_l)_l$ clearly forms a partition of $\ivff{\eps,1}^N$ (choosing accordingly $s^{0}_k$ and $s^{3p-2}_k$ to cover the full square). Consequently, the integration over $t^p$ on the domain $\ivff{\eps,1}^N$ can be reduce to a finite sum of integrals on each element $I_l$.
  Thus, let us now set $l\in\brc{1,\dotsc,3(p-1)}^N$ and observe that for any $t^p\in I_l$,
  \[
    \forall k\in\brc{1,\dotsc,N};\quad \abs{t^p_k - s^l_k} \leq \min_{1\leq j\leq p-1} \abs{ x_k+t^p_k - z^j_k } + \min_{1\leq j\leq p-1} \abs{ y_k+t^p_k - z^j_k },
  \]
  where $z^j$ denotes either $x+t^j$ or $y+t^j$. As a consequence, according to Proposition~\ref{prop:ani_lnd_fbs}, for any $t^p\in I_l$, $\Pii(t^p) \leq c_0 \, R^{-1} \rho(x,y)^{-1} \rho(s^j,t^p)^{-1}$, and thus,
  \begin{align*}
    \int_{I_l} \Pii(t^p)^d \,\dt t^p \leq c_0 R^{-d}  \rho(x,y)^{-d} \int_{I_l} \rho(s^j,t^p)^{-d} \,\dt t^p.
  \end{align*}
  Let us prove the last integral is finite:
  \begin{align*}
    \int_{I_l} \rho(s^j,t^p)^{-d} \,\dt t^p
    \leq c \int_{B(0,1)} \brcbb{ \sum_{\ell=1}^N \abs{u_\ell}^{H_\ell} }^{-d} \dt u
    \leq c \int_{B(0,1)} \norm{v}^{-d} \prod_{\ell=1}^N \abs{v_\ell}^{1/H_\ell-1} \,\dt v,
  \end{align*}
  using the simple change of variable $v_\ell=u_\ell^{H_\ell}$. Then, switching to spherical coordinates,
  \begin{align*}
    \int_{I_l} \rho(s^j,t^p)^{-d} \,\dt t^p
    &\leq c\int_0^1 r^{-d+\sum_{\ell=1}^N 1/H_\ell - 1 } \,\dt r \int_{S^{N-1}} h(\varphi)\,\dt \varphi
    < +\infty,
  \end{align*}
  since $\sum_{\ell=1}^N 1/H_\ell > d$ and the induced function $h$ is bounded on the sphere $S^{N-1}$ ($1/H_\ell-1 > 0$ for every $\ell$). Hence,
  \[
    \int_{\ivff{\eps,1}^N} \Pii(t^p)^d \,\dt t^p \leq c_1 \, p^N R^{-d}  \rho(x,y)^{-d},
  \]
  and by induction on $p$, we obtain Inequality \eqref{eq:lemma_IxyR}.
\end{proof}

\begin{remark}  \label{remark:ineq_ext}
  We may note that the proof of Lemma~\ref{lemma:moments_IxyR} also provides a slighter more general inequality. Namely, for any $\alpha$ such that $d<\alpha<\sum_{\ell=1}^N \frac{1}{H_\ell}$,
  \begin{align*}
    \int_{\ivff{\eps,1}^{Np}} \prb{ \absb{ B_0^H(x+t^j) - B_0^H(y+t^j) } \leq R^{-1} , 1\leq j \leq p }^\alpha \,\dt t^1 \cdots \dt t^p \\
    \leq c_0^p (p!)^N R^{-\alpha p} \rho(x,y)^{-\alpha p},
  \end{align*}
  This extension will be directly used in the proof of Theorem~\ref{th:lebesgue_fbs}.
\end{remark}

The proof of Theorem~\ref{th:weak_unif_dim_fbs} follows the exact same structure as the ones presented by \citet{Khoshnevisan.Wu.ea-2006} and \citet{Wu.Xiao-2007a}. Consequently, we only present the main steps, and refer to the former for the technical details which remain the same.
\begin{proof}[Proof of Theorem \ref{th:weak_unif_dim_fbs}]
  Since $B^H$ is H\"older continuous with respect to the anisotropic metric $\rho$, classic results (see for instance \cite{Xiao-2009a}) on images of fractal sets show that almost surely,
  \[
    \dimH B^H(F+t) \leq \min\brcb{ d, \dimH^\rho F }\quad\text{for all Borel sets $E$ and all }t\in\ivff{0,1}^N.
  \]
  To obtain the lower bound, we first prove that almost surely, there exists $n_0(\omega)$ such that
  \[
    \forall n\geq n_0(\omega),\ \forall x,y\in\ivff{0,1}^N;\quad I(x,y,2^n) \leq c_0\, n^N 2^{-nd} \rho(x,y)^{-d}.
  \]
  The previous property is direct application of Borel--Cantelli lemma and the continuity of fractional Brownian sheets. We refer to \cite{Wu.Xiao-2007a} for the details of the arguments.

  Let us now set $\omega\in\Omega$, a Borel set $F\subset\ivff{0,1}^N$, $\gamma\in\ivoo{0,\dimH F}$ and $\eta\in\ivoo{0,d\wedge\gamma}$. Frostman's lemma implies the existence of a probability measure $\mu$ carrying $F$ and such that
  \[
    \mu(S) \leq c_1\, (\diam^\rho S)^\gamma\quad\text{for any measurable set }S\subset\ivff{0,1}^N.
  \]
  Let $\nu_t$ be the image of $\mu$ by $B^H(\cdot+t)$. Still according to Frostman's lemma, it is sufficient to prove
  \[
    I \eqdef \iint_{R^{2d}} \frac{ \nu_t(\dt u)\,\nu_t(\dt v) }{\norm{u-v}^\eta} < \infty\quad\text{for almost all }t\in\ivff{0,1}^N,
  \]
  to obtain our result.
  Following the idea of \citet{Kaufman-1989}, we have
  \begin{align*}
    I
    &= \iint \frac{ \mu(\dt x)\,\mu(\dt y) }{ \norm{ B^H(x+t) - B^H(y+t) }^\eta } \\
    &= \eta \int_0^\infty \iint \indi_{\ivff{-1,1}}\pthb{ R\,\norm{ B^H(x+t) - B^H(y+t) } } R^{\eta-1} \mu(\dt x)\mu(\dt y) \,\dt R \\
    &\leq 1 + \eta \int_1^\infty \iint \indi_{\ivff{-1,1}}\pthb{ R\,\norm{ B^H(x+t) - B^H(y+t) } } R^{\eta-1} \mu(\dt x)\mu(\dt y) \,\dt R
  \end{align*}
  Integrating the previous integral over $t\in\ivff{0,1}^N$, we thus need to show that
  \[
    J \eqdef \iint \int_1^\infty  I(x,y,R) R^{\eta-1} \dt R\,\mu(\dt x)\mu(\dt y) < \infty.
  \]
  Let $D=\brc{(x,y)\in\ivff{0,1}^{2N} : \rho(x,y) \leq R^{-1}}$ and $J_1$, $J_2$ respectively denote the integral $J$ over the domains $D$ and $D^c$. Since $(\mu\times\mu)(D) \leq c_1 \,R^{-\gamma}$,
  \[
    J_1 \leq c_1 \int_1^\infty R^{-\gamma+\eta-1} \dt R < \infty.
  \]
  Furthermore, as for any $(x,y)$, $I(x,y,R) \leq c_0(\omega)\, R^{-d} \rho(x,y)^{-d}$,
  \begin{align*}
    J_2
    &\leq c_0(\omega) \iint \rho(x,y)^{-d} \,\mu(\dt x)\mu(\dt y) \int_{\rho(x,y)^{-1}}^\infty R^{\eta-d-1} \log(R)^N \,\dt R \\
    &\leq c_2(\omega) \iint \rho(x,y)^{-\eta} \log(\rho(x,y)^{-1})^N \,\mu(\dt x)\mu(\dt y) < \infty.
  \end{align*}
  The last two inequalities complete the proof of Theorem \ref{th:weak_unif_dim_fbs}.
\end{proof}

The second part of this section is devoted to the proof of Theorem~\ref{th:lebesgue_fbs}. As previously, the sketch of the latter is highly inspired by the original work of \citet{Kaufman-1989}, and we therefore focus on differences compared to the previous results presented by \citet{Khoshnevisan.Wu.ea-2006} and \citet{Wu.Xiao-2007a}.
\begin{proof}[Proof of Theorem \ref{th:lebesgue_fbs}]
  Since $\dimH^\rho(F)>d$, there exists a probability measure $\mu$ on $F$ such that $\iint_{R^{2N}} \frac{\mu(\dt s)\,\mu(\dt t)}{\rho(s,t)^d} < \infty$.
  To prove that $\lambda_d\pthb{B^H(F+t)} > 0$, it is sufficient to show that
  \[
    \text{a.s. }\int_{\ivff{0,1}^N} \int_{\R^d} \abs{\widehat{\nu}_t(u)}^2 \,\dt u \,\dt t < \infty.
  \]
  where $\widehat{\nu}_t(u) = \int_{\R^N} e^{i\scpr{u}{B^H(x+t)}} \,\mu(\dt x)$ and the exceptional set does not depend on $t$.

  Let $\psi\geq 0$ be a smooth function on $\R^d$ such that $\psi(u)=1$ when $1\leq\abs{u}\leq 2$ and $\psi(u)=0$ outside $1/2\leq\abs{u}\leq 5/2$. Since $\int_{\abs{u}>1} \abs{\widehat{\nu}_t(u)}^2 \,\dt u$ is bounded above by
  \[
    \sum_{n=0}^\infty 2^n \iint_{R^{2N}} \widehat{\psi}\pthb{ 2^n B^H(x+t) - 2^n B^H(y+t) } \,\mu(\dt x)\mu(\dt y),
  \]
  it remains to prove
  \[
    \sum_{n=0}^\infty 2^n \int_{\ivff{0,1}^N} \iint_{R^{2N}} \widehat{\psi}\pthb{ 2^n B^H(x+t) - 2^n B^H(y+t) } \,\mu(\dt x)\mu(\dt y) \,\dt t < \infty.
  \]
  For that purpose, we study in the following lemma the component $J$ defined by
  \[
    J(x,y,n) \eqdef \int_{\ivff{0,1}^N} \widehat{\psi}\pthb{ 2^n B^H(x+t) - 2^n B^H(y+t) } \,\dt t.
  \]
\end{proof}

\begin{lemma}  \label{lemma:ineq_J}
  There exist two positive constants $c_{0}$ and $\beta$ such that with probability $1$, for all $n\geq n(\omega)$ and $\rho(x,y)\geq c_{0} n^N 2^{-n}$,
  \[
    J(x,y,n) \leq (2+\beta)^{-n} \rho(x,y)^{-d}.
  \]
\end{lemma}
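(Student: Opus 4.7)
The plan is to follow Kaufman's moment method, in the variant of Khoshnevisan--Wu--Xiao adapted to use the anisotropic LND of Proposition~\ref{prop:ani_lnd_fbs} and the refinement of Remark~\ref{remark:ineq_ext}. The strategy is to obtain a $p$-th moment bound on $J(x,y,n)$ which is strictly sharper than what the classical sectorial LND would yield, and then apply Markov's inequality together with a Borel--Cantelli argument over a dyadic discretization of $[\eps,1]^{2N}$.

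For the moment computation, write $\widehat{\psi}(v)=\int\psi(u)e^{-iu\cdot v}\,du$ and, by Fubini, express
\[
  \espb{J(x,y,n)^p}=\int_{[\eps,1]^{Np}}\int_{(\R^d)^p}\prod_{j=1}^p\psi(u^j)\exp\Bigl(-\tfrac12\,2^{2n}\varo\bigl(\textstyle\sum_j u^j\!\cdot\!\Delta(t^j)\bigr)\Bigr)du\,dt,
\]
with $\Delta(t)=B^H(x+t)-B^H(y+t)$. Since the $d$ components of $B^H$ are i.i.d., the variance decomposes into $d$ identical copies of a quadratic form $u^{(k),T}\Sigma u^{(k)}$, where $\Sigma$ is the $p\times p$ covariance of $\bigl(B_0^H(x+t^j)-B_0^H(y+t^j)\bigr)_j$, and Gaussian integration in $u$ gives
\[
  \espb{J(x,y,n)^p}\leq C^p\,2^{-npd}\int_{[\eps,1]^{Np}}(\det\Sigma)^{-d/2}\,dt^1\cdots dt^p.
\]
Using $\det\Sigma\geq\prod_j\sigma_j^2$ together with $\sigma_j^2\geq c\,\rho(x,y)^2\sum_\ell r_\ell(j)^{2H_\ell}$ from Proposition~\ref{prop:ani_lnd_fbs}, and applying the partition argument from the proof of Lemma~\ref{lemma:moments_IxyR} combined with the $\alpha$-extension of Remark~\ref{remark:ineq_ext} for some $\alpha\in(d,\sum_\ell 1/H_\ell)$, the remaining $t$-integral is bounded by $c^p(p!)^N$ with an extra decay factor $2^{-n\eta p}$ produced by the choice $\alpha>d$. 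This yields a moment estimate of the form
\[
  \espb{J(x,y,n)^p}\leq c^p(p!)^N\,2^{-np(d+\eta)}\rho(x,y)^{-pd}
\]
for some $\eta=\eta(\alpha,d,H,N)>0$.

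Markov's inequality then gives $\prb{J(x,y,n)\geq(2+\beta)^{-n}\rho(x,y)^{-d}}\leq c^p(p!)^N\bigl((2+\beta)/2^{d+\eta}\bigr)^{np}$. Choosing $\beta\in(0,2^{d+\eta}-2)$ and $p=p(n)$ of linear order in $n$ makes this bound summable even after multiplication by the cardinality $2^{2Nmn}$ of a dyadic grid on $[\eps,1]^{2N}$ at scale $2^{-mn}$, provided the integer $m$ is fixed large enough. Borel--Cantelli then yields the almost sure bound on the grid, and a continuity argument based on the anisotropic H\"older regularity of $B^H$ with respect to the metric $\rho$ and the Lipschitz smoothness of $\widehat{\psi}$ propagates it to all $(x,y)$ with $\rho(x,y)\geq c_{0}n^N 2^{-n}$, the threshold being precisely the resolution at which the continuity error matches the target bound. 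The main obstacle is securing the strict positivity of $\eta$: this is exactly what the combination of the refined anisotropic LND of Proposition~\ref{prop:ani_lnd_fbs} and the extension of Remark~\ref{remark:ineq_ext} provides, since the estimate obtained using only Lemma~\ref{lemma:moments_IxyR} (with exponent $d$) would give merely $J\leq c\,n^N 2^{-nd}\rho^{-d}$ a.s., insufficient to ensure the summability of the series $\sum_n 2^n\iint J(x,y,n)\,\mu(dx)\mu(dy)$ in the proof of Theorem~\ref{th:lebesgue_fbs} when the target dimension $d$ is small.
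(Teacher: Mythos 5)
Your overall architecture (a $p$-th moment bound on $J$, then Markov's inequality, Borel--Cantelli on a dyadic grid, and a $\rho$-H\"older continuity argument) matches the paper's, and you correctly identify that the entire difficulty is to beat the exponent $d$ by some $\eta>0$. The gap is in the one step that is supposed to produce that gain. Once you bound $\prod_j\psi(u^j)\le\|\psi\|_\infty^p$ and carry out the Gaussian integration, you are left with $C^p2^{-npd}\int(\det\Sigma)^{-d/2}\,d\mathbf{t}$, where the exponent $-d/2$ is forced by the dimension of the $u$-integral; the subsequent chain $\det\Sigma\ge\prod_j\sigma_j^2$, $\sigma_j^2\ge c\,\rho(x,y)^2\sum_\ell r_\ell(j)^{2H_\ell}$, followed by the partition argument of Lemma~\ref{lemma:moments_IxyR}, yields exactly $C^p(p!)^N2^{-npd}\rho(x,y)^{-pd}$ and nothing more. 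There is no slot in this computation into which an exponent $\alpha>d$ can be inserted: Remark~\ref{remark:ineq_ext} is a statement about small-ball \emph{probabilities} raised to the power $\alpha$, not about $(\det\Sigma)^{-\alpha/2}$, and it only produces a gain when played, via H\"older's inequality, against the Lebesgue measure of a small subset of the $\mathbf{t}$-domain. A $p=1$ sanity check makes the failure concrete: $\int\sigma(t)^{-d}\,dt\asymp\rho(x,y)^{-d}$ carries no $n$-dependence whatsoever, so your intermediate bound cannot produce the claimed factor $2^{-n\eta}$.

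The missing ingredients are the two decompositions on which the paper's proof turns. First, the $\mathbf{t}$-domain must be split into a well-separated set $S_n$ (where for some $k,\ell$ all the quantities $|t^k_\ell-t^j_\ell|$ and $|x_\ell+t^k_\ell-t^j_\ell-y_\ell|$ exceed $r_n^{1/H_\ell}$ with $r_n\asymp n2^{-n}$) and its complement $T_n$. On $S_n$ one exploits that $\psi$ is supported in $\{1/2\le|u|\le 5/2\}$ --- precisely the information your bound $\prod_j\psi(u^j)\le\|\psi\|_\infty^p$ discards --- together with the sectorial LND \eqref{eq:sec_local_nondet1} to make the relevant variance at least $c(n+1)^2$, giving a contribution $e^{-cn^2}$. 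The complement $T_n$ has Lebesgue measure at most $c^pn^{2Np}r_n^{p\sum_\ell 1/H_\ell}$, and it is only there, after a further split on the event $D_n=\{\max_j\|B^H(x+t^j)-B^H(y+t^j)\|>2^{-(1-\varepsilon)n}\}$ (handled by the rapid decay of $\widehat\psi$), that Remark~\ref{remark:ineq_ext} enters, through $\int_{T_n}\mathbb{P}(\cdot)^d\,d\mathbf{t}\le\lambda(T_n)^{1-d/\alpha}\bigl(\int\mathbb{P}(\cdot)^\alpha\,d\mathbf{t}\bigr)^{d/\alpha}$; the gain comes from checking $2d(1-\varepsilon)+(1-d/\alpha)\sum_\ell 1/H_\ell>2$. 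Without these decompositions the asserted factor $2^{-n\eta p}$ is unsubstantiated, and this is exactly the delicate case (for $d\ge2$ the naive bound $J\le c\,n^N2^{-nd}\rho(x,y)^{-d}$ already makes $\sum_n 2^nJ$ summable; the lemma is only needed in earnest when $d=1$).
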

\begin{proof}
  As observed originally by \citet{Kaufman-1989}, using the reasoning presented in the proof of Theorem~\ref{th:weak_unif_dim_fbs} and a Borel--Cantelli argument, it is sufficient to show the existence of positive constants $\beta$, $c_{3,1}$ and $c_{3,2}$ independent of $p$ such that
  \begin{align}  \label{eq:bound_J}
    \esp{ J(x,y,n)^{2p} } \leq c_{3,1}^{p} n^{c_{3,2}p} (2+\beta)^{-2np} \rho(x,y)^{-2pd}.
  \end{align}
  Namely, we need to upper bound the following term
  \begin{align*}
    \espbb{ \int_{\ivff{\eps,1}^{2Np}} \prod_{j=1}^{2p} \absb{ \widehat\psi(2^n B^H(x+t^j)) - \widehat\psi(2^n B^H(y+t^j)) } \dt \mathbf{t} },
  \end{align*}
  where $\mathbf{t} \eqdef (t^1,\dotsc, t^{2p})$, $t^j\in\R^N$.
  For any $n\in\N$, let $S_n$ be the following set
  \begin{align*}
    S_n = \bigcup_{k=1}^{2p} \bigcup_{\ell=1}^N \brcB{ \mathbf{t}\in\ivff{\eps,1}^{2Np} : \abs{t^k_\ell-t^j_\ell} > r_n^{1/H_\ell} \text{ and } \abs{x_\ell+t^k_\ell-t^j_\ell-y_\ell} > r_n^{1/H_\ell} \quad \forall j\neq k } ,
  \end{align*}
  where $r_n\eqdef c_{3,0}\,(n+1)2^{-n}$. We will begin by studying the former integral on the domain $S_n$. It takes the following equivalent form:
  \begin{align*}
    &\Esp \int_{S_n} \int_{\R^{2pd}} \prod_{j=1}^{2p} \exp\brcb{ i\scpr{ \xi^j }{ 2^n B^H(x+t^j) - 2^n B^H(y+t^j) } } \,\dt\xi\,\dt \mathbf{t} \\
    &= \int_{S_n} \int_{\R^{2pd}} \exp\brcbb{ -\frac{1}{2} \sum_{\ell=1}^d \varbb{ \sum_{j=1}^{2p} \xi^j_\ell \bktb{ B^H(x+t^j) - 2^n B^H(y+t^j) } }  } \prod_{j=1}^{2p} \psi(\xi^j) \,\dt\xi\,\dt \mathbf{t}.
  \end{align*}
  Since $1/2\leq \abs{\xi^k}\leq 5/2$, there exists $\ell_0\in\brc{1,\dotsc,d}$ such that $\xi^k_{\ell_0} \geq (2\sqrt{d})^{-1}$. Hence, owing the classic sectorial LND property \eqref{eq:sec_local_nondet1},
  \begin{align*}
    \varbb{ \sum_{j=1}^{2p} \xi^j_\ell \bktb{ B^H(x+t^j) - 2^n B^H(y+t^j) } }
    &\geq c_{0}\, 2^{2n} \sum_{\ell=1}^N \min_{j\neq k} \brcB{ \abs{t^k_\ell - t^j_\ell}^{2H_\ell}, \abs{x_\ell+t^k_\ell - t^j_\ell-y_\ell}^{2H_\ell} } \\
    &\geq c_{1} \, (n+1)^2.
  \end{align*}
  Hence,
  \[
    \Esp \int_{S_n} \int_{\R^{2pd}} \prod_{j=1}^{2p} \exp\brcb{ i\scpr{ \xi^j }{ 2^n B^H(x+t^j) - 2^n B^H(y+t^j) } } \,\dt\xi\,\dt \mathbf{t} \leq e^{-c_{2} n^2},
  \]
  where $c_{2}$ can be chosen sufficiently large up to a modification of $c_{3,0}$. As a consequence, the previous term will clearly be negligible compared to the bound we aim to obtain.

  Let us now consider the second integral over the domain $T_n\eqdef\ivff{0,1}^{2Np} \setminus S_n$, and first note that $T_n$ can be written as
  \begin{align*}
    T_n = \bigcap_{k=1}^{2p} \bigcap_{\ell=1}^N \pthbb{
    &\brcB{ \mathbf{t}\in\ivff{\eps,1}^{2Np} : \min_{j_{\ell,1}\neq k} \absb{t^k_\ell-t^{j_{\ell,1}}_\ell} \leq r_n^{1/H_\ell} }  \\
    \cup&\brcB{ \mathbf{t}\in\ivff{\eps,1}^{2Np} : \min_{j_{\ell,2}\neq k} \absb{x_\ell+t^k_\ell-t^{j_{\ell,2}}_\ell-y_\ell} \leq r_n^{1/H_\ell} } }.
  \end{align*}
  We easily observe that $T_n$ is the union of at most $(4n)^{2Np}$ sets of the following form:
  \begin{align*}
    A_{\mathbf{j}} = \brcb{ \mathbf{t}\in\ivff{\eps,1}^{2Np} :  \absb{z_\ell+t^k_\ell-t^{j_{\ell,k}}_\ell} \leq r_n^{1/H_\ell}, \forall k\in\brc{1,\dotsc,2p}, \forall\ell\in\brc{1,\dotsc,N}},
  \end{align*}
  where $z_\ell=0$ or $x_\ell-y_\ell$. Up to a permutation of indices, the previous set can be written as
  \begin{align*}
    A_{\mathbf{j}} = \bigtimes_{\ell=1}^N \brcb{ \mathbf{t}\in\ivff{\eps,1}^{2p} :  \absb{z_\ell+t^k_\ell-t^{j_{\ell,k}}_\ell} \leq r_n^{1/H_\ell}, \forall k\in\brc{1,\dotsc,2p}}.
  \end{align*}
  As a consequence, using Lemma~3.8 proved in \cite{Khoshnevisan.Wu.ea-2006}, the Lebesgue measure of the previous set can be bounded as following:
  \begin{align*}
    \lambda_{2Np}\pth{A_{\mathbf{j}}} \leq
    2^{Np} \,r_n^{p\sum_{\ell=1}^N \tfrac{1}{H_\ell}},
  \end{align*}
  providing a bound on the measure of the full set $T_n$: $\lambda_{2Np}\pth{T_n} \leq c_4^{p}\,n^{2Np} \, r_n^{p\sum_{\ell=1}^N \tfrac{1}{H_\ell}}$.
  Then, we divide the integral
  \begin{align*}
    &\int_{T_n} \espbb{ \prod_{j=1}^{2p} \absB{ \widehat\psi(2^n B^H(x+t^j)) - \widehat\psi(2^n B^H(y+t^j)) } } \dt \mathbf{t} \\
  \end{align*}
  into two parts $I_1$ and $I_2$, respectively conditioning the former with respect to the events $D_n$ and $D_n^c$, where
  \begin{align*}
    D_n = \brcB{ \max_{1\leq j\leq 2p} \normb{ B^H(t+x^j) - B^H(y+x^j) } > 2^{-(1-\eps)n} }.
  \end{align*}
  Since $\widehat\psi$ is rapidly decreasing, there exists a constant $c_5>0$ which can be chosen as large as possible such that
  \begin{align*}
    I_1 \leq c_6^{p}n^{c_7 p} \,2^{-np\sum_{\ell=1}^N \tfrac{1}{H_\ell}} \exp\pth{-c_5\, n}.
  \end{align*}
  Recall that we may assumed that $N\geq 2$, inducing that $\sum_{\ell=1}^N \tfrac{1}{H_\ell} > 2$, meaning that the previous bound is negligible compared to the right end term in Equation~\eqref{eq:bound_J} ($\beta$ can be chosen small enough).

  Finally, we may conclude the proof by bounding the term $I_2$. Let us set $\alpha$ such that $d<\alpha<\sum_{\ell=1}^N \tfrac{1}{H_\ell}$ and observe
  \begin{align*}
    &\int_{T_n} \prB{ \max_{1\leq j\leq 2p} \normb{ B^H(x+t^j)) - B^H(y+t^j)) } \leq 2^{-(1-\eps)n} } \dt \mathbf{t} \\
    &=\int_{T_n} \prB{ \max_{1\leq j\leq 2p} \absb{ B_0^H(x+t^j)) - B_0^H(y+t^j)) } \leq 2^{-(1-\eps)n} }^d \dt \mathbf{t} \\
    &\leq \lambda_{2Np}\pth{T_n}^{1-\tfrac{d}{\alpha}} \bktbb{ \int_{\ivff{\eps,1}^{2Np}} \prB{ \max_{1\leq j\leq 2p} \absb{ B_0^H(x+t^j)) - B_0^H(y+t^j)) } \leq 2^{-(1-\eps)n} }^\alpha \dt \mathbf{t} }^{\tfrac{d}{\alpha}}
  \end{align*}
  using the classic Hölder inequality. Based on Remark~\ref{remark:ineq_ext} and the previous estimates, we get
  \begin{align*}
    \int_{T_n} \prB{ \max_{1\leq j\leq 2n} \normb{ B^H(x+t^j)) - B^H(y+t^j)) } \leq 2^{-(1-\eps)n} } \dt \mathbf{t} \\
    \leq c_8^p \, p^{c_9 n} 2^{-np\pthb{ 2d(1-\eps) + \pthb{1-\tfrac{d}{\alpha}}\sum_{\ell=1}^N \tfrac{1}{H_\ell} }} \rho(x,y)^{-2pd}
  \end{align*}
  Then, we may observe that $\eps$ can be chosen as small as wanted, and particularly, such that $2d(1-\eps) + \pthb{1-\tfrac{d}{\alpha}}\sum_{\ell=1}^N \tfrac{1}{H_\ell} > 2$.

  The combination of the three previous bounds clearly shows the existence of $\beta>0$ such that
  \begin{align*}
     \esp{ J(x,y,n)^{2p} } \leq c_{3,1}^{p} p^{c_{3,2}n} (2+\beta)^{-2np} \rho(x,y)^{-2pd},
  \end{align*}
  where the constants $c_{3,1}$ and $c_{3,2}$ are independent of $p\in\N$ and $n\in\N$.
\end{proof}

\begin{proof}[Proof of Theorem \ref{th:lebesgue_fbs}]
  To conclude the proof of Theorem~\ref{th:lebesgue_fbs}, we simply observe that the previous Lemma entails
  \begin{align*}
    \sum_{n=0}^\infty 2^n \iint_{R^{2N}} \abs{J(x,y,n)} \,\mu(\dt x)\mu(\dt y)
    \leq \sum_{n=0}^\infty 2^n (2+\beta)^{-n} \iint_{R^{2N}} \rho(x,y)^{-d} \,\mu(\dt x)\mu(\dt y)
    < \infty.
  \end{align*}
\end{proof}


\end{document}